\documentclass[11pt, a4paper, oneside]{article}
\usepackage{amsmath, amssymb, amsthm, amsfonts, amsxtra, latexsym, amscd,
pb-diagram,  graphics, hyperref, setspace}
\usepackage [all]{xy}
%*****************************
%****************************
\theoremstyle{plain}

%\newtheorem{pro}[thm]{Proposition}

%\newtheorem{hq}[thm]{Corollary}

%****************************

\newcommand{\ri}{\rightarrow }

\newcommand{\Fm}{\widetilde{F}}

\DeclareMathOperator{\Coker}{Coker}\DeclareMathOperator{\Dis}{Dis}
 \DeclareMathOperator{\Hom}{Hom}
 
\DeclareMathOperator{\Ext}{Ext} \DeclareMathOperator{\Ker}{Ker}

\newtheorem{thm}{\bf Theorem}
\newtheorem{lem}[thm]{\bf Lemma} % Lemma
\newtheorem{pro}[thm]{\bf Proposition} % Proposition
\newtheorem{hq}[thm]{\bf Corollary} % Corollary

\theoremstyle{definition}

%*******************************************

%*****************************
\begin{document}

\centerline{\Large  \bf Abelian crossed modules and strict}
\centerline{\Large  \bf  Picard categories} 
\vspace{0.5cm}
 \centerline{\bf
Nguyen Tien Quang}
 \centerline{\it Department of Mathematics, Hanoi National University of
Education,}
 \centerline{\it E-mail:
cn.nguyenquang@gmail.com} \vspace{0.25cm} \centerline{\bf Che Thi
Kim Phung} \centerline{\it Faculty of Mathematics and Applications,
Saigon University,} \centerline{\it E-mail: kimphungk25@yahoo.com}
\vspace{0.25cm}
 \centerline{\bf Ngo Sy Tung} \centerline{\it Department of Mathematics, Vinh University,}
%\centerline{\it Hanoi, Vietnam}
\centerline{\it E-mail: ngositung@yahoo.com}

\begin{abstract}
In this paper, we state the notion of morphisms in the category of abelian crossed modules and prove that  this category is equivalent to the category of strict Picard categories and regular symmetric monoidal functors. The theory of obstructions for symmetric monoidal functors and symmetric cohomology groups are applied to show  a treatment of the group extension problem of the type of an abelian crossed module. 
\end{abstract}

\section{Introduction}

Crossed modules have been used widely, and in various contexts, since their definition by Whitehead \cite{White49}
in his investigation of the algebraic structure of second relative homotopy groups.
 A brief summary of researches
 related to crossed modules was given in  \cite{CCG} in which  Carrasco et al. obtained interesting results on the category of \emph{abelian} crossed modules. The notion of abelian crossed module was characterized by that of the \emph{center} of a crossed module in  the paper of  Norrie  \cite{N90}.

Crossed modules are essentially the same as \emph{strict categorical groups} (see \cite{J-S, Br76, Baez, N}).
A \emph{strict} categorical group is a  categorical group in which the  associativity, unit constraints are strict
$(\textbf{a}=id, \textbf{l}=id=\textbf{\textbf{r}})$ and, for each object $x$, there is  an object $y$ such that $x\otimes y=1=y\otimes x$. This concept is also called a \emph{$\mathcal G$-groupoid} by Brown and Spencer \cite{Br76}, or a \emph{2-group} by Noohi \cite{N}, or a \emph{strict 2-group} by Baez and Lauda \cite{Baez}.

Brown and Spencer \cite{Br76} (Theorem 1) published a  proof that the category of \emph{$\mathcal G$-groupoids} is equivalent to the category \textbf{CrossMd} of crossed modules (the morphisms in the first category are  functors preserving the group structure, those in the second category are homomorphisms of crossed modules).

Another result on crossed modules,   the group extension problem of the type of a crossed module, was presented by Brown and Mucuk in \cite{Br94} (Theorem 5.2). This problem has attracted the attention of many mathematicians.

 In our opinion, the above date can be considered for abelian crossed modules. At the beginning of Section 3, we show that each abelian crossed module is seen as a strict Picard category (as defined in Section 2). Therefore, we can apply Picard category theory to study abelian crossed modules and obtain results similar to the above results on crossed modules.

The content of the paper consists of two main results.
%%%%%%%%%55
%%%%%%%%%%%%%%%555
In Section 3, we
prove that (Theorem 4) the category $\bf Picstr$ of strict Picard categories and
regular  symmetric monoidal functors is equivalent to the category
 $\bf AbCross$ of abelian crossed modules.  Every morphism in the   category  $\bf AbCross$
 consists of a homomorphism $(f_1,f_0): \mathcal M\rightarrow \mathcal M'$ of abelian crossed modules  and an element of the group of symmetric 2-cocycles $Z^2_s(\pi_0\mathcal M,\pi_1\mathcal M')$. This theorem is analogous to Theorem 1 \cite{Br76}.
 %%%%%%%%%%55

 %%%%%%%%%%%%%%%55

In Section 4, we study the group extension problem of the type of an abelian crossed module. The theory of obstructions for symmetric monoidal functors is applied to show a treatment of this problem. Each abelian crossed module
$B\stackrel{d}{\rightarrow}D$ defines a strict Picard category $\mathbb P$. The third invariant of $\mathbb P$ is an element $\overline{k}\in H^3_s(\Coker d, \Ker d)$. Then a group homomorphism  $\psi: Q\ri \Coker d$ induces $\psi^\ast k\in Z^3_s(Q, \Ker d)$. Theorem \ref{dlc} shows that the vanishing of $\overline{\psi^\ast k}$ in $H^3_s(Q, \Ker d)$ is necessary and sufficient for there to exist a group extension of the type of  an abelian crossed module $B\stackrel{d}{\rightarrow}D$. Each such extension induces a symmetric monoidal functor
$F:Dis Q\ri \mathbb P$. This correspondence determines a bijection  (Theorem \ref{schr})
$$
\Omega:\mathrm{Hom}^{Pic}_{(\psi,0)}[\mathrm{Dis}Q,\mathbb P_{B\ri D}]\rightarrow \mathrm{Ext}^{ab}_{B\ri D}(Q, B,\psi).$$
Theorem \ref{dlc} is analogous to Theorem 5.2 \cite{Br94}.
%%%%%%%%%%%%%%%%5

%%%%%%%%%%%%%%%%%%%%%%%%%%%%%%%%%%

\section{Preliminaries}

 A {\it symmetric monoidal category}
$\mathbb{P}:=(\mathbb{P},\otimes,I,{\bf a,l,r,c})$
 consists of a category $\mathbb{P}$,
   a functor
$\otimes:\mathbb{P}\times\mathbb{P}\rightarrow\mathbb{P}$  and
natural isomorphisms  ${\bf a}_{X,Y,Z}: (X\otimes Y)\otimes Z
\stackrel{\sim}{\rightarrow}
 X\otimes (Y\otimes Z ),{\bf l}_X: I\otimes X \stackrel{\sim}{\rightarrow} X,
 {\bf r}_X: X\otimes I \stackrel{\sim}{\rightarrow} X$ and ${\bf c}_{X,Y}: X\otimes
  Y\stackrel{\sim}{\rightarrow} Y\otimes X$ such that, for any objects $X,Y,Z,T$ of $\mathbb P$, the following coherence
  conditions hold:

 \vspace{0.1cm}
  \indent $i) \ {\bf a}_{X,Y,Z\otimes T}{\bf a}_{X\otimes Y, Z,T}= (id_X \otimes {\bf a}_{Y,Z,T})
{\bf a}_{X,Y\otimes Z,T}({\bf a}_{X,Y,Z}\otimes id_T)$, \label{2.1}\\
\indent $ii)\
{\bf c}_{X,Y}\cdot
 {\bf c}_{Y,X}=id,$\label{2.1a}\\
\indent $iii)\
(id_X \otimes {\bf l}_Y){\bf a}_{X,I,Y}= {\bf r}_X \otimes id_Y,$ \label{2.2}\\
\indent $iv)\
(id_Y \otimes {\bf c}_{X,Z}) {\bf a}_{Y,X,Z}({\bf c}_{X,Y}\otimes
id_Z)={\bf a}_{Y,Z,X} {\bf c}_{X,Y\otimes Z} {\bf a}_{X,Y,Z}.$
\label{2.3}

\vspace{0.2cm}
A {\it Picard category} is a symmetric monoidal category in which every morphism is invertible and, for each object $X$,  there is an object  $Y$ with  a morphism $X\otimes Y\rightarrow I$.

A  Picard category is said to be \emph{strict} when the constraints {\bf a} = $id$, {\bf c} = $id$, {\bf l} = $id$ = {\bf r} and, for each object $X$,  there is an object  $Y$ such that  $X\otimes Y = I$.

 If $\mathbb P$, $\mathbb P'$ are  symmetric monoidal categories, then a {\it symmetric monoidal functor} $F:=(F,\widetilde{F},F_*): \mathbb P\rightarrow \mathbb P'$ consists of a functor
 $F : \mathbb P \rightarrow \mathbb P'$, natural isomorphisms $\widetilde{F}_{X,Y} :FX \otimes FY \rightarrow F(X\otimes Y)$
  and an  isomorphism
 $F_\ast : I' \rightarrow FI$,
such that, for any objects $X,Y,Z$ of $ \mathbb P$, the following coherence conditions hold:
\begin{equation*}
\widetilde{F}_{X,Y\otimes Z}(id_{FX}\otimes \widetilde{F}_{Y,Z})
{\bf a}_{FX,FY,FZ}= F({\bf a}_{X,Y,Z})\widetilde{F}_{X\otimes Y,Z}(\widetilde{F}_{X,Y}\otimes id_{FZ}), \label{6.5}
\end{equation*}
\begin{equation*}
F({\bf r}_X)\widetilde{F}_{X,I}(id_{FX}\otimes F_\ast)= {\bf
r}_{FX}\ ,\
 F({\bf l}_X)\widetilde{F}_{I,X}(F_\ast\otimes id_{FX}) = {\bf l}_{FX}, \label{6.6}
 \end{equation*}
 \begin{equation*}
\widetilde{F} _{Y,X}{\bf c}_{FX,FY}=F({\bf c}_{X,Y})\widetilde{F}_{X,Y}.
\label{6.7}
\end{equation*}
Note that if $F:=(F,\widetilde{F}, F_* )$ is a symmetric monoidal functor between Picard categories, then the isomorphism $F_*: I'\rightarrow FI$ is implied from $F$ and $\widetilde{F},$ so we can omit $F_*$ when not necessary.

A \emph{symmetric monoidal natural equivalence} between symmetric monoidal functors $(F,\widetilde{F},F_\ast), (F',\widetilde{F}',F'_\ast): \mathbb P\rightarrow \mathbb P'$  is a natural equivalence $\theta:F\stackrel{\sim}{\rightarrow} F'$ such that, for any objects $X,Y$ of $\mathbb P$, the following coherence conditions hold:
\begin{align*}\label{3.4}
\widetilde{F}'_{X,Y}(\theta_X\otimes\theta_Y)=\theta_{X\otimes
Y}\widetilde{F}_{X,Y},\;\theta_I F_\ast=F'_\ast.
\end{align*}

Let $\mathbb{P}:=(\mathbb{P},\otimes,I,{\bf a,l,r,c})$ be a Picard category.  According to Sinh \cite {Sinh75}, $\mathbb{P}$ is equivalent to its reduced Picard category $\mathbb S= S_\mathbb P$
thanks to {\it canonical} equivalences
$$G: \mathbb P\rightarrow \mathbb S,\;\;\;H:\mathbb S\rightarrow \mathbb P.$$
For convenience,
 we briefly recall  the construction of $\mathbb S$. Let $M=\pi_0\mathbb{P}$ be the abelian group of isomorphism classes of
the objects in  $\mathbb{P}$ where the operation is induced by the
tensor product, $N=\pi_1\mathbb{P}$ be the abelian group of
automorphisms of the unit object  $I$ of $\mathbb P$ where the
operation is composition. Then,   objects of $\mathbb S$ are
elements $x\in M$, and its morphisms are automorphisms $(a,
x):x\rightarrow x,\;a\in N$. The composition of morphisms is given
by
$$(a,x)\circ (b,x)=(a+b,x).$$
 The tensor product is defined by
\begin{align*}
 x \otimes y &= x+y,  \\
 (a, x) \otimes (b, y) &= (a+b, x+y).
\end{align*}
The unit constraints in $\mathbb S $ are
strict (in the sense that ${\bf l}_x = {\bf r}_x =id_x$), the
associativity constraint
$\xi$ and the symmetry constraint $\eta$ are, respectively,  functions
$M^3\rightarrow N,\
M^2\rightarrow N $
satisfying normalized condition:
$$\xi(0,y,z)=\xi(x,0,z)=\xi(x,y,0)=0.$$
and satisfying the following relations:

\vspace{0.1cm}
\indent
$i) \ \xi(y,z,t)-\xi(x+y,z,t)+\xi(x,y+z,t)-\xi(x,y,z+t)+\xi(x,y,z)=0,$\\
\indent $ii)\ \eta(x,y)+\eta(y,x)=0,$\\
\indent $iii)\ \xi(x,y,z)-\xi(y,x,z)+\xi(y,z,x)+\eta(x,y+z)-\eta(x,y)-\eta(x,z)=0,$

\vspace{0.15cm}
 The pair $(\xi, \eta)$ satisfying these relations %(\ref{2.4})-(\ref{2.6})
  is just an
  element in the group $Z^{3}_{s}(M,N)$ of \emph{symmetric} 3-cocycles in the sense of \cite {MacL52}.
We refer to $\mathbb S$ as {\it Picard  category of type $(M,N)$}.

Let $ \mathbb S=(M,N,\xi, \eta), \mathbb S'=(M',N',\xi',
\eta')$  be Picard categories. A functor $F:  \mathbb
S\rightarrow \mathbb S'$ is called a \emph{functor of  type}
$(\varphi, f)$ if
there are group homomorphisms $\varphi:M\rightarrow M'$,
$f:N\rightarrow N'$ satisfying
$$F(x)=\varphi(x),\ \ F(a,x)=(f(a),\varphi(x)). $$
In this case, $(\varphi ,f)$ is called a \emph{pair of homomorphisms}, and the function
 \begin{equation}\label{ct}
 k=\varphi^{\ast}(\xi',\eta')-f_{\ast}(\xi,\eta)
 \end{equation}
   is called an {\it obstruction} of the functor  $F:\mathbb S\rightarrow
\mathbb S'$   of type $ ( \varphi, f)$.

 The following proposition is implied from the results on monoidal functors of  type
$(\varphi,f)$ in
\cite{QCT} .

\begin{pro}\label{md1} Let $\mathbb P,\mathbb P'$ be Picard categories and $ \mathbb S, \mathbb S'$ be  their reduced Picard categories, respectively.\\
%\begin{enumerate}[i)]
\indent $i)$ Any symmetric monoidal functor $(F, \Fm): \mathbb  P\rightarrow \mathbb P'$
induces a symmetric monoidal functor $\mathbb S_F:\mathbb
S\rightarrow \mathbb S'$ of type $(\varphi, f)$. Further, $\mathbb S_F=G' F H,$ where $H, G'$ are canonical equivalences.\\
\indent $ii)$ Any symmetric monoidal functor $(F,\widetilde{F}):\mathbb S\rightarrow
\mathbb S'$ is a functor of  type $ ( \varphi, f).$\\
\indent $iii)$ The functor  $F:\mathbb S\ri \mathbb S' $ of type $(\varphi, f)$ is realizable, i.e.,
 there are isomorphisms $\widetilde{F}_{x,y} $ so that $(F,\widetilde{F} )$ is  a symmetric monoidal functor, if and only if its obstruction
   $\overline{k}$ vanishes in  $H^3_s(M, N')$. Then, there is a bijection
 \begin{equation*}
   {\mathrm{Hom}}^{Pic}_{(\varphi, f)}[\mathbb S, \mathbb S']\leftrightarrow H^2_s(M,
 N'),
\end{equation*}
 where $\Hom_{(\varphi, f)}^{Pic}[\mathbb S, \mathbb S']$ denotes the set
 of homotopy classes of symmetric monoidal functors of  type  $(\varphi, f)$ from
$\mathbb S$ to $\mathbb S'$.
%\end{enumerate}
\end{pro}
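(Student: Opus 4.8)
The plan is to reduce everything to the reduced (skeletal) Picard categories $\mathbb S$, $\mathbb S'$ and then invoke the known structure theory for monoidal functors of type $(\varphi,f)$ from \cite{QCT}, citing the passage between $\mathbb P$ and $\mathbb S$ via the canonical equivalences $G,H$ that Sinh's theorem provides. For part $(i)$, given a symmetric monoidal functor $(F,\Fm):\mathbb P\ri\mathbb P'$, I would simply transport it across the canonical equivalences: set $\mathbb S_F = G'FH$ and equip it with the natural isomorphisms obtained by pasting $\widetilde{G}'$, $\Fm$, and $\widetilde{H}$ together with the monoidal structure of the canonical equivalences $H:\mathbb S\ri\mathbb P$ and $G':\mathbb P'\ri\mathbb S'$. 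The coherence conditions for $\mathbb S_F$ then follow formally from those of $F$, $G'$, $H$ by a diagram chase. This gives a symmetric monoidal functor $\mathbb S\ri\mathbb S'$; by part $(ii)$ it is automatically of type $(\varphi,f)$, so $(i)$ reduces to $(ii)$.

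For part $(ii)$, the point is purely a matter of how morphisms work in skeletal Picard categories. Since $\mathbb S=(M,N,\xi,\eta)$ has objects the elements of $M$ and only automorphisms as morphisms, a functor $F:\mathbb S\ri\mathbb S'$ determines a set map $\varphi:M\ri M'$ on objects and, since $\Hom_{\mathbb S}(x,x)\cong N$ compatibly for every $x$, a map $f:N\ri N'$ on the automorphism groups with $F(a,x)=(f(a),\varphi(x))$. I would check that functoriality forces $f$ to be a group homomorphism (it respects composition, which is addition in $N$), and that the existence of the natural isomorphisms $\widetilde F_{x,y}$ — even before imposing coherence — forces $\varphi(x\otimes y)$ and $\varphi(x)\otimes\varphi(y)$ to be isomorphic, hence equal in the skeletal category, so $\varphi$ is a homomorphism of monoids; invertibility of objects then makes it a group homomorphism. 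Thus $F$ is of type $(\varphi,f)$.

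For part $(iii)$, this is exactly the content of the cited results in \cite{QCT} specialized to the symmetric case: a functor of type $(\varphi,f)$ admits a compatible family $\widetilde F_{x,y}$ making $(F,\widetilde F)$ symmetric monoidal precisely when the obstruction cocycle $k=\varphi^\ast(\xi',\eta')-f_\ast(\xi,\eta)$ of \eqref{ct} is a coboundary, i.e. $\overline k=0$ in $H^3_s(M,N')$. Given vanishing, the set of choices of $\widetilde F$ up to symmetric monoidal natural equivalence is a torsor under $Z^2_s(M,N')$ modulo coboundaries, yielding the bijection with $H^2_s(M,N')$; here one uses that two families $\widetilde F,\widetilde F'$ differ by a $2$-cochain, that the symmetric monoidal coherence conditions cut this down to symmetric $2$-cocycles, and that natural equivalences correspond to $2$-coboundaries. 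The main obstacle is bookkeeping: verifying that the symmetry axiom $(ii)$ and $(iv)$ for ${\bf c}$, together with the functor's compatibility with ${\bf c}$, really translate into the \emph{symmetric} cohomology of \cite{MacL52} rather than ordinary group cohomology — i.e. that the extra relation $\eta(x,y)+\eta(y,x)=0$ and the mixed relation $(iii)$ above are exactly what distinguishes $Z^3_s,Z^2_s$ from the non-symmetric versions. Once that identification is in place, everything else is formal transport along the equivalences $G,G',H$.
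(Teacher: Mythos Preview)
Your proposal is correct and aligns with the paper's own treatment: the paper does not give an independent proof of this proposition but simply states that it ``is implied from the results on monoidal functors of type $(\varphi,f)$ in \cite{QCT}'', and your sketch is precisely a faithful unpacking of that citation, including the transport along $G'FH$ for part~$(i)$ (which the paper records verbatim in the statement) and the obstruction/torsor argument for part~$(iii)$. The only point you pass over lightly in~$(ii)$ is why the induced map $f$ on $N$ is independent of the base object $x$; this follows from naturality of $\widetilde F$ together with $F(id_x)=id_{F(x)}$, and is implicit in the ``compatibly'' you invoke.
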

%%%%%%%%%%%%%%%%%%%%%%%%%%%

\section{Classification of abelian crossed modules by  strict Picard categories }

In this section, we will show a treatment of the problem on
classification of abelian crossed modules due to 2-dimensional symmetric cohomology groups and regular symmetric monoidal functors.

We recall that
a {\it crossed module} is a quadruple $\mathcal M =(B,D,d,\theta),$
where $d:B\ri D,\;\theta:D\ri$ Aut$B$ are group
homomorphisms satisfying the following relations:\\
\indent $C_1.\ \theta d=\mu$,\\
\indent $C_2.\ d(\theta_x(b))=\mu_x(d(b)),\quad x\in D, b\in B$,\\
where $\mu_x$ is an inner automorphism given
by $x$.

In this paper,  the crossed module $(B,D,d,\theta)$ is sometimes
denoted by $B\stackrel{d}{\rightarrow}D$, or simply $B\rightarrow
D$.

Standard consequences of the axioms are that
$\Ker d$ is a left $\Coker d$-module  under the action
$$sa=\theta _x(a),\ \ a\in\Ker d, \ x\in s\in \Coker d.$$
The groups $\Coker d, \Ker d$ are also denoted by $\pi_0\mathcal M,\pi_1\mathcal M$, respectively.

We are interested in the case when $B, D$ are abelian groups. Then, it follows from  the condition $C_1$ that $\theta d = id$ (and hence  Im$d$ acts trivially on $B$). The condition $C_2$ leads to $\theta_x(b)-b \in \Ker d$. Therefore, $\theta $ determines a function $g:\Coker d\times \Ker d\ri \Ker d$  by
$$g(s,b)=sb-b.$$
It is straightforward to see that $g$ is a biadditive normalized function. Conversely,  the data  $(B\xrightarrow{d} D, g)$, where $B, D$ are  abelian, determines completely  a  crossed module. Particularly, if $g=0$ we obtain the notion of \emph{abelian crossed module}. In other words, abelian crossed modules are defined as follows.

\vspace{0.1cm}
\noindent{\bf Definition.} A crossed module $\mathcal M = (B,D,d, \theta)$ is said to be \emph{abelian} when $B, D$ are abelian  and $\theta = 0$.

 For example, if $\mathcal M$ is a crossed module  in which $B,D$ are abelian  and $d$ is a monomorphism, then $\theta=0$. Therefore, $\mathcal M$ is an abelian crossed module.

\vspace{0.2cm}
%The notion of abelian crossed module appears from the studies of Norrie \cite{N90}.
%Let us recall from \cite{N90} that
The notion of abelian crossed modules can be characterized by that of the center of crossed modules as in Norrie's work \cite{N90}. We say that
 the {\it center}
$\xi\mathcal M $ of a crossed module $\mathcal M = (B,D,d,\theta)$ is a subcrossed module of $\mathcal M$ and  defined by  $(B^D,st_D(B)\cap Z(D), d, \theta)$, where $B^D$ is the \emph{fixed point subgroup}  of $B$, $st_D(B)$ is the {\it stabilizer} in $D$ of $B$, that is,
\begin{align*}
B^D&=\{b\in B:\theta_xb=b\;\text{for all} \;x\in D\},\\
st_D(B)&=\{x\in D:\theta_xb=b\;\text{for all} \;b\in B\},
\end{align*}
and  $Z(D)$ is the center of  $D$ (note that $B^D$ is in the center of $B$).
Then, the  crossed module is termed  {\it abelian} if
$\xi(B,D,d,\theta)= (B,D,d,\theta)$.

\vspace{0.1cm}
It is well-known that  crossed modules are the same as strict categorical groups  (see \cite {J-S}, Remark 3.1). Now, we show that abelian crossed modules can be seen as strict Picard categories. We state this in detail.

$\bullet $ For any abelian crossed module $B\rightarrow  D$, we can
construct a strict Picard category $\mathbb P_{B\rightarrow D}=\mathbb P$, called the Picard category {\it associated to} the abelian crossed module  $B\ri D$, as follows.
$$\mathrm{Ob}(\mathbb P)=D,\;\ \mathrm{Hom}(x,y)=\{b\in B\ |\ x=d(b)+y\},$$
for objects $x,y\in D.$
 The composition of two
morphisms is given by
\begin{equation}   (x\stackrel{b}{\ri}y\stackrel{c}{\ri}z)=(x\stackrel{b+c}{\ri}z).\label{htmt} \end{equation}
The tensor operation on objects is given by the addition in
the group $D$ and, for two morphisms $(x\stackrel{b}{\ri}y), (x'\stackrel{b'}{\ri}y')$ in $\mathbb P$, one defines
\begin{equation}\label{mt}
(x\stackrel{b}{\ri}y)\otimes(x'\stackrel{b'}{\ri}y')=(x+x'\stackrel{b+b'}{\rightarrow}y+y').
\end{equation}
Associativity, commutativity and unit constraints  are identities (${\bf a}=id, {\bf c}=id, {\bf l}= id = {\bf r}$).
By the definition of an abelian crossed module, it is easy to check that
 $\mathbb P$ is a strict Picard category.

%%%%%%%%%%%%%%%%%%%
$\bullet $ Conversely, for a strict Picard category $(\mathbb P,\otimes)$, we determine an {\it associated} abelian crossed module $\mathcal M_{\mathbb P}=(B, D,d)$ as follows.
Set
\begin{equation} D= \mathrm{Ob}(\mathbb P),\;\;B=\{x\xrightarrow{b}0 | x\in D  \}. \notag \end{equation}
The  operations in $D$ and  $B$  are, respectively,  given by
$$x+y=x\otimes y,\;\ b+c=b\otimes c. $$
Then $D$ becomes an abelian group whose zero element is 0, and the
inverse of $x$ is $-x$ ($x\otimes(-x)=0$).
$B$ is a group whose zero element is $id_0$, and the
inverse of $(x\xrightarrow{b}  0)$
 is the morphism $(-x\xrightarrow{\overline{b}}  0 )(b\otimes \overline{b}=id_0)$. Further, $B$ is abelian due to the naturality of the commutativity constraint  {\bf c}= $id$.

The homomorphism $d:B\rightarrow D$ is given by
$$d(x\xrightarrow{b}  0)=x.$$

\noindent{\bf Definition.} A {\it homomorphism}
$(f_1,f_0):(B, D,d)\ri (B',D',d')$ of abelian crossed modules consists of group homomorphisms  $f_1:B\ri B'$, $f_0:D\ri D'$ such that
\begin{equation*}\label{gr1}
f_0d=d'f_1.
\end{equation*}

Clearly, the category of abelian crossed modules is a full subcategory of the category of crossed modules.

In order to classify abelian crossed modules we establish the following lemmas.

\begin{lem}\label{t1}
Let $(f_1,f_0): \mathcal M = (B,D,d)\ri \mathcal M' = (B',D',d')$ be a homomorphism
of abelian crossed modules. Let $\mathbb P$ and $\mathbb P'$ be  strict
Picard categories  associated to  $\mathcal M$ and $\mathcal M'$, respectively.\\
%\begin{enumerate}[i)]
\indent $i)$ There exists the functor $F:\mathbb P\ri\mathbb
P'$  defined by
$F(x)=f_0(x),\;F(b)=f_1(b),$ for $x\in D,  b\in B$.\\
\indent $ii)$ Natural isomorphisms $\widetilde{F}_{x,y}:F(x)+F(y)\ri F(x+y)$ together with $F$ is a
symmetric monoidal functor if and only if
$\widetilde{F}_{x,y}=\varphi(\overline{x},\overline{y}) $, where $\varphi $ is a symmetric 2-cocycle of the group $Z^2_s( \Coker d, \Ker d').$
%\end{enumerate}
\end{lem}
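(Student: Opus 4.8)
The plan is to check everything by hand, the whole point being that in the strict Picard categories $\mathbb P$ and $\mathbb P'$ composition of morphisms is simply \emph{addition} in $B$, resp. $B'$ (rule \eqref{htmt}), so each coherence condition will collapse to an additive identity in $\Ker d'$.

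For $(i)$: a morphism $b\colon x\to y$ of $\mathbb P$ is an element $b\in B$ with $x=d(b)+y$; applying $f_0$ and using $f_0d=d'f_1$ gives $f_0(x)=d'(f_1(b))+f_0(y)$, so $f_1(b)$ is a genuine morphism $f_0(x)\to f_0(y)$ of $\mathbb P'$, and $F$ is well defined on morphisms. Since $id_x$ is the element $0\in B$, composition is given by \eqref{htmt}, and $f_1$ is a homomorphism, one gets $F(id_x)=f_1(0)=0=id_{F(x)}$ and $F(c\circ b)=f_1(b+c)=f_1(b)+f_1(c)=F(c)\circ F(b)$; hence $F$ is a functor.

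For $(ii)$: since $f_0$ is a homomorphism, $F(x)+F(y)=f_0(x)+f_0(y)=f_0(x+y)=F(x+y)$, so every $\Fm_{x,y}$ is an \emph{automorphism} of the object $f_0(x+y)$ of $\mathbb P'$, i.e. an element of $\Ker d'$. I would first use naturality of $\Fm$ to see that $\Fm_{x,y}$ depends only on the images of $x,y$ in $\Coker d$: for morphisms $b\colon x\to x'$, $c\colon y\to y'$ of $\mathbb P$, the naturality square, rewritten in $B'$ via \eqref{htmt} and \eqref{mt}, reads $f_1(b+c)+\Fm_{x',y'}=\Fm_{x,y}+f_1(b+c)$, hence $\Fm_{x,y}=\Fm_{x',y'}$ since $B'$ is abelian; and such $b,c$ exist precisely when $\overline x=\overline{x'}$ and $\overline y=\overline{y'}$ in $\Coker d$. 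Thus $\Fm_{x,y}=\varphi(\overline x,\overline y)$ for a unique function $\varphi\colon\Coker d\times\Coker d\to\Ker d'$, and conversely any such $\varphi$ defines, by the same formula, a natural family $\Fm_{x,y}$.

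It then remains to see that the three coherence axioms for $(F,\Fm)$ are equivalent to $\varphi\in Z^2_s(\Coker d,\Ker d')$. Taking $F_\ast=id_{I'}$ (legitimate, as $FI=f_0(0)=0=I'$), using that $\mathbf a=\mathbf c=\mathbf l=\mathbf r=id$ in both categories and that $F$ sends these to identities, and rewriting each axiom additively in $\Ker d'$ (the $\Coker d$-action on which is trivial, the crossed modules being abelian), one finds: the associativity axiom becomes $\varphi(\overline y,\overline z)-\varphi(\overline x+\overline y,\overline z)+\varphi(\overline x,\overline y+\overline z)-\varphi(\overline x,\overline y)=0$; the two unit axioms become the normalization $\varphi(0,\overline y)=\varphi(\overline x,0)=0$; and the symmetry axiom becomes $\varphi(\overline x,\overline y)=\varphi(\overline y,\overline x)$. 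These are exactly the relations defining $Z^2_s(\Coker d,\Ker d')$, and every step is reversible. The one genuinely delicate point in the whole argument is this passage to $\Coker d$ — extracting from naturality that $\Fm_{x,y}$ is insensitive to the representatives $x,y$; after that, everything is bookkeeping of additions in $\Ker d'$.
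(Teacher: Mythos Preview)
Your proof is correct and follows essentially the same route as the paper's: both arguments first observe that $\widetilde{F}_{x,y}\in\Ker d'$ because $f_0$ is a homomorphism, then use the naturality square (rewritten additively via \eqref{htmt} and \eqref{mt}) to see that $\widetilde{F}_{x,y}$ depends only on $(\overline x,\overline y)\in\Coker d\times\Coker d$, and finally translate the associativity, unit, and symmetry coherence conditions into the defining relations of $Z^2_s(\Coker d,\Ker d')$. Your treatment of part~$(i)$ is somewhat more explicit than the paper's, and you state the converse direction (from $\varphi$ back to $\widetilde{F}$) more overtly, but there is no substantive difference in method.
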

\begin{proof}
i) By the determination of a strict Picard category associated to an
abelian crossed module and the fact that  $f_1$ is a homomorphism,
$F$ is a functor.

ii) Since $f_1, f_0$ are  group homomorphisms, for two morphisms $(x\stackrel{b}{\ri}x')$,
$(y\stackrel{c}{\ri}y')$ in $\mathbb P$, we have
\begin{equation*}\label{tx}F(b\otimes c)=F(b)\otimes F(c).
\end{equation*}

On the other hand, since $f_0$ is a homomorphism and $F(x)=f_0(x)$, $\widetilde{F}_{x,y}: F(x)+F(y)\rightarrow F(x+y)$
is a morphism in $\mathbb P'$ if and only if
 $d'(\widetilde{F}_{x,y})=0'$, i.e.,
$$\widetilde{F}_{x,y}\in \Ker d'.$$
Then the naturality of  $(F, \widetilde{F})$, that is the commutativity of the following diagram
\begin{equation*}\label{bdtn}
\begin{diagram}
\node{F(x)+F(y)}\arrow{e,t}{\widetilde{F}_{x,y}}\arrow{s,l}{F(b)\otimes
F(c)}\node{F(x+y)}\arrow{s,r}{F(b\otimes c)}\\
\node{F(x')+F(y')}\arrow{e,b}{\widetilde{F}_{x',y'}}\node{F(x'+y'),}
\end{diagram}
\end{equation*}
is equivalent to the relation
 $\widetilde{F}_{x,y}=\widetilde{F}_{x',y'}$, where $x=d(b)+x', y=d(c)+y'.$
 This determines a function $\varphi: \Coker d\times \Coker d\rightarrow \Ker d'$  by
 $$ \varphi(\overline{x}, \overline{y})=\widetilde{F}_{x,y}.$$

 By $F(0)=0'$, the compatibility of $(F,\widetilde{F})$ with  unit constraints  is equivalent to the normalization of $\varphi$. From the relations \eqref{htmt} and \eqref{mt},
   the compatibility of
$(F,\widetilde{F})$ with  associativity, commutativity constraints are, respectively, equivalent to  relations
\begin{equation*} \widetilde{F}_{y,z}+\widetilde{F}_{x,y+z}=\widetilde{F}_{x,y}+\widetilde{F}_{x+y,z}, \end{equation*} \begin{equation*}  \widetilde{F}_{x,y}=\widetilde{F}_{y,x}. \end{equation*}
This shows that $\varphi\in Z^2_s( \Coker d, \Ker d')$.
\end{proof}
%%%%%%%%%%%%%%%%%%

\noindent\textbf{Definition.} A symmetric monoidal functor
$(F,\widetilde{F}):\mathbb P\ri\mathbb P'$ between Picard
categories $\mathbb P, \mathbb P'$ is termed  {\it regular  } if
$F(x)\otimes F(y)=F(x\otimes y)$ for  $x,y \in $ Ob$\mathbb P$.

\vspace{0.1cm}
 Thanks to Lemma \ref{t1}, we  determine the category ${\bf AbCross}$ whose objects are abelian crossed modules and morphisms are triples $(f_1, f_0, \varphi)$, where $(f_1, f_0): (B\xrightarrow{d} D)\rightarrow
  (B'\xrightarrow{ d'}D')$ is a homomorphism of abelian crossed modules and $\varphi\in Z^2_s( \Coker d, \Ker d')$.

\begin{lem}\label{n1}
Let $\mathbb P$, $\mathbb P'$  be corresponding strict Picard
categories associated to  abelian crossed modules $(B, D,d)$, $(B',
D',d')$, and  let $(F,\widetilde{F}):\mathbb P \ri \mathbb P'$ be a
regular  symmetric monoidal functor. Then, the triple
$(f_1,f_0,\varphi)$, where
$$f_1(b)=F(b),\ f_0(x)=F(x),\ \varphi(s_1,s_2)=\widetilde{F}_{x_1,x_2},$$
for $b\in B,\ x\in D,\ x_i \in s_i \in \Coker d, i=1,2$, is a morphism in ${\bf AbCross}$.
\end{lem}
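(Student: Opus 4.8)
The plan is to verify that the triple $(f_1,f_0,\varphi)$ extracted from a regular symmetric monoidal functor $(F,\widetilde{F})$ really satisfies the two defining conditions of a morphism in $\mathbf{AbCross}$: namely that $(f_1,f_0)$ is a homomorphism of abelian crossed modules (so $f_1,f_0$ are group homomorphisms with $f_0 d = d' f_1$), and that $\varphi$ is a well-defined element of $Z^2_s(\Coker d,\Ker d')$. This is essentially the converse of Lemma \ref{t1}, so the proof is a matter of reading the monoidal-functor axioms backwards.

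First I would show $f_0$ is a group homomorphism: on objects $x,y\in D$ we have $f_0(x)+f_0(y) = F(x)+F(y)$, and regularity gives $F(x)\otimes F(y) = F(x\otimes y) = f_0(x+y)$, using that tensor on objects is addition in $D$ and $D'$. Since $\mathbb P'$ is strict, there is nothing further to check here. Next, $f_1$ is a group homomorphism: an element $b\in B$ is a morphism $x\xrightarrow{b}0$ in $\mathbb P$, the group operation on $B$ is $\otimes$ on such morphisms, and $F$ is a functor compatible with $\otimes$, so $f_1(b+c)=F(b\otimes c)=F(b)\otimes F(c)=f_1(b)+f_1(c)$; one also notes $F$ sends $id_0$ to $id_{0'}$. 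For the compatibility $f_0 d = d' f_1$: if $b\in B$ corresponds to $x\xrightarrow{b}0$, then $d(b)=x$, and $F(b)$ is a morphism $F(x)\to F(0)=0'$, so by the definition of $d'$ we get $d'(f_1(b)) = d'(F(b)) = F(x) = f_0(x) = f_0(d(b))$.

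Then I would turn to $\varphi$. Since $F$ is regular, $\widetilde{F}_{x,y}$ is an isomorphism $F(x)+F(y)\to F(x+y)$ with $F(x)+F(y)=F(x+y)$, hence (by the description of morphisms in $\mathbb P'$) an element of $\Ker d'$; so $\varphi$ takes values in $\Ker d'$ as required. Well-definedness of $\varphi$ on classes in $\Coker d$ is exactly the naturality square for $(F,\widetilde{F})$ applied to morphisms $x\xrightarrow{b}x'$ and $y\xrightarrow{c}y'$ with $x=d(b)+x'$, $y=d(c)+y'$: just as in the proof of Lemma \ref{t1}, naturality forces $\widetilde{F}_{x,y}=\widetilde{F}_{x',y'}$, so the value depends only on the cosets $s_1=\overline{x_1}$, $s_2=\overline{x_2}$. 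Normalization of $\varphi$ comes from compatibility of $\widetilde{F}$ with the (strict) unit constraints together with $F(0)=0'$, and the 2-cocycle and symmetry identities come from compatibility of $\widetilde{F}$ with the (strict) associativity and commutativity constraints, using the composition and tensor formulas \eqref{htmt}, \eqref{mt} — the computation is literally the one performed in Lemma \ref{t1}, read in the same direction. Hence $\varphi\in Z^2_s(\Coker d,\Ker d')$ and $(f_1,f_0,\varphi)$ is a morphism in $\mathbf{AbCross}$.

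The only mild subtlety — the ``main obstacle,'' such as it is — is bookkeeping: one must be careful that regularity is genuinely used (it is what makes $\widetilde{F}_{x,y}$ an \emph{automorphism} of a single object and hence an element of $\Ker d'$, and it is what upgrades the general monoidal-functor coherence data to the on-the-nose equalities needed to land in $\mathbf{AbCross}$), and that the group structures on $B$, $B'$, $D$, $D'$ invoked are precisely those built in the construction of the associated abelian crossed module, so that ``$F$ is a functor and a monoidal functor'' translates without slippage into ``$f_1,f_0$ are group homomorphisms and $\varphi$ is a cocycle.'' Beyond that, every step is a direct dualization of Lemma \ref{t1}, so I would keep the write-up short and simply point to that proof for the identical coherence computations.
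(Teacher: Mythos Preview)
Your proposal is correct and follows essentially the same route as the paper: verify that $f_0$ is a homomorphism via regularity, that $f_1$ is a homomorphism, that $f_0d=d'f_1$ from the source/target description of $F(b)$, and then invoke Lemma~\ref{t1} for $\varphi\in Z^2_s(\Coker d,\Ker d')$. The one difference worth flagging is your argument for $f_1$: you write ``$F$ is a functor compatible with $\otimes$, so $F(b\otimes c)=F(b)\otimes F(c)$,'' but a monoidal functor does \emph{not} strictly preserve $\otimes$ on morphisms---only up to the naturality square for $\widetilde F$, and unwinding that here requires already knowing $\widetilde{F}_{db,dc}=\varphi(0,0)=0$. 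The paper sidesteps this by observing instead that $F$ preserves \emph{composition} of morphisms (which any functor does on the nose), and since composition in $\mathbb P$ is also addition in $B$ (relation~\eqref{htmt}), this gives $f_1(b+c)=f_1(b)+f_1(c)$ directly; you may want to adopt that phrasing.
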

\begin{proof}
Since $F$ is regular, $f_0$ is a group homomorphism. Since $F$
preserves the composition of morphisms, $f_1$ is a group
homomorphism.

Any $b\in B$  can be considered as a morphism $(db\stackrel{b}{\ri} 0)$
in $\mathbb P$, and hence $(F(db)\stackrel{F(b)}{\ri}0')$ is a morphism
in $\mathbb P'$. This means that $f_0(d(b))=d'(f_1(b))$, for all $b\in B.$  Thus, $(f_1, f_0)$ is a homomorphism of abelian crossed modules.

 According to Lemma \ref{t1}, $\widetilde{F}_{x_1,x_2} $ determines a function $\varphi\in Z^2_s( \Coker d, \Ker d')$ by
      $$\varphi (s_1,s_2)=\widetilde{F}_{x_1,x_2}, \;\; x_i\in s_i\in  \Coker d,\ i=1,2.$$
  Therefore, $(f_1, f_0,\varphi)$ is a morphism in ${\bf AbCross}$.
 \end{proof}
Let ${\bf Picstr}$ denote the category of strict Picard categories
and regular  symmetric monoidal functors, we obtain the following
theorem.
\begin{thm}[\label{pl}Classification Theorem] There exists an equivalence
\[\begin{matrix}
 \Phi:{\bf AbCross}&\ri& \bf Picstr,\\
(B\ri D)&\mapsto&\mathbb{P}_{B\ri D},\\
(f_1,f_0,\varphi)&\mapsto&(F,\widetilde{F}),
\end{matrix}\]
where $F(x)=f_0(x),\ F(b)=f_1(b),\
\widetilde{F}_{x_1,x_2}=\varphi(s_1,s_2)$ for $x\in
D,\    b\in B,\ x_i \in s_i \in \Coker d,\ i=1,2.$
\end{thm}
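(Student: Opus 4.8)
The plan is to verify that $\Phi$ is a well-defined functor and then exhibit an essentially inverse functor $\Psi : \mathbf{Picstr} \to \mathbf{AbCross}$, arising from the construction $\mathbb{P} \mapsto \mathcal{M}_{\mathbb{P}}$ already described in the text together with the assignment of Lemma \ref{n1} on morphisms. First I would check functoriality of $\Phi$. On objects it sends an abelian crossed module to its associated strict Picard category, which the text has already verified to be strict Picard. On morphisms, Lemma \ref{t1} shows that $(F,\widetilde{F})$ as defined is genuinely a symmetric monoidal functor, and it is regular because $F(x)+F(y)=f_0(x)+f_0(y)=f_0(x+y)=F(x+y)$ by additivity of $f_0$; so $\Phi$ lands in $\mathbf{Picstr}$. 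Preservation of identities is immediate ($f_1=id$, $f_0=id$, $\varphi=0$ gives $F=id$, $\widetilde{F}=id$), and preservation of composition requires checking that the composite of $(f_1,f_0,\varphi)$ and $(f_1',f_0',\varphi')$ — whose composition law in $\mathbf{AbCross}$ must be read off from the cocycle bookkeeping, namely $f_1'f_1$, $f_0'f_0$, and the 2-cocycle $(f_1')_\ast\varphi + (f_0)^\ast\varphi'$ — matches the composite symmetric monoidal functor $(F'F, \widetilde{F'F})$ with $\widetilde{(F'F)}_{x,y}=F'(\widetilde{F}_{x,y})\circ\widetilde{F'}_{Fx,Fy}$.

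Next I would construct $\Psi$ on objects as $\mathbb{P}\mapsto \mathcal{M}_{\mathbb{P}}=(B,D,d)$ with $\theta=0$, as in the text, and on morphisms as the triple $(f_1,f_0,\varphi)$ produced in Lemma \ref{n1}; that lemma already shows the triple is a morphism in $\mathbf{AbCross}$. Functoriality of $\Psi$ is the mirror image of the check for $\Phi$ and uses that a regular symmetric monoidal functor between these particular Picard categories is completely determined by where it sends objects, where it sends morphisms-to-zero, and the family $\widetilde{F}_{x,y}\in\Ker d'$, which (by the naturality argument in the proof of Lemma \ref{t1}) depends only on the classes $\overline{x},\overline{y}$ in $\Coker d$.

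Finally I would produce the natural isomorphisms $\Psi\Phi\cong \mathrm{id}_{\mathbf{AbCross}}$ and $\Phi\Psi\cong \mathrm{id}_{\mathbf{Picstr}}$. For $\Psi\Phi$: starting from $(B\xrightarrow{d}D)$, the Picard category $\mathbb{P}_{B\to D}$ has $\Ob=D$ and $\Hom(x,0)=\{b\in B: x=d(b)\}$, so $\Psi\Phi(B\to D)$ has object group $D$ and arrow group $\{x\xrightarrow{b}0\}\cong\{b\in B : \text{no constraint}\}$ — here one identifies $x\xrightarrow{b}0$ with $b$, noting $x=d(b)$ is forced, giving a canonical isomorphism with $B$ compatible with $d$; one then checks these isomorphisms are natural in $\mathbf{AbCross}$ and in fact the identity up to this canonical relabelling, and that the $\varphi$-component is returned unchanged since $\widetilde{F}_{x,y}=\varphi(\overline{x},\overline{y})$ by construction. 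For $\Phi\Psi$: starting from a strict Picard category $\mathbb{P}$, one recovers $\mathbb{P}_{\mathcal{M}_{\mathbb{P}}}$ with objects $D=\Ob\mathbb{P}$ and a morphism $x\to y$ being a $b$ with $x=d(b)+y$, i.e. a morphism $x-y\to 0$ in $\mathbb{P}$; strictness of $\mathbb{P}$ ($\mathbf{a}=\mathbf{c}=\mathbf{l}=\mathbf{r}=id$ and existence of strict inverses $x\otimes y=I$) is exactly what is needed to build a strict isomorphism $\mathbb{P}\cong \mathbb{P}_{\mathcal{M}_{\mathbb{P}}}$, identity on objects and sending $f:x\to y$ to the composite $x\otimes(-y)\to y\otimes(-y)=I$; one checks this respects $\otimes$ on morphisms and hence is a (regular, strict) symmetric monoidal equivalence, natural in $\mathbb{P}$.

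The main obstacle I expect is the morphism-level bookkeeping: pinning down the precise composition law for the triples $(f_1,f_0,\varphi)$ in $\mathbf{AbCross}$ so that $\Phi$ and $\Psi$ are strictly functorial (rather than merely pseudo-functorial), and verifying that the $\widetilde{F}$-components compose correctly — i.e. that $F'(\varphi(\overline{x},\overline{y}))+\varphi'(\overline{f_0 x},\overline{f_0 y})$ is the 2-cocycle attached to the composite, where one must be careful that $F'$ restricted to $\Ker d'$ is $f_1'$ and that $f_0$ induces $\Coker d\to\Coker d'$ on classes. Everything else — strictness, the object-level identifications, naturality of the unit and counit — is routine once the definitions from the excerpt are unwound, and reduces to the biadditivity/normalization facts already recorded for abelian crossed modules and to the coherence conditions collected in Section 2.
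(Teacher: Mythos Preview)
Your approach is correct but takes a different route from the paper. The paper's proof is very short: it invokes Lemma~\ref{t1} to see that $\Phi$ is injective on hom-sets, Lemma~\ref{n1} to see that $\Phi$ is surjective on hom-sets, and then observes that for any strict Picard category $\mathbb P$ one has $\Phi(\mathcal M_{\mathbb P})=\mathbb P$ (claimed as literal equality, not just isomorphism), concluding equivalence via the ``fully faithful and essentially surjective'' criterion. You instead build an explicit quasi-inverse $\Psi$ and verify the unit and counit natural isomorphisms. Your route is longer but more scrupulous on two points the paper leaves implicit: first, the composition law for triples $(f_1,f_0,\varphi)$ in $\mathbf{AbCross}$ is never written down in the paper, whereas you correctly identify it as $(f_1'f_1,\,f_0'f_0,\,(f_1')_\ast\varphi+(f_0)^\ast\varphi')$ and check compatibility with composition of symmetric monoidal functors; second, the paper's assertion that $\Phi(\mathcal M_{\mathbb P})=\mathbb P$ on the nose tacitly identifies $\Hom_{\mathbb P}(x,y)$ with $\Hom_{\mathbb P}(x-y,0)$, which you make explicit via $f\mapsto f\otimes id_{-y}$. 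What the paper's approach buys is brevity --- no naturality of unit/counit to check --- at the cost of suppressing exactly the bookkeeping you flag as the main obstacle.
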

\begin{proof}
 Suppose that $\mathbb P$ and $\mathbb P'$ are Picard categories associated to abelian crossed modules ${B\ri D}$ and $ {B'\ri D'}$, respectively.
By Lemma \ref{t1}, the correspondence  $(f_1, f_0, \varphi)\mapsto (F,\widetilde{F})$ determines an injection on
the homsets
$$\Phi:\Hom_{{\bf AbCross}}(B\ri D, B'\ri D')\ri \Hom_{{\bf Picstr}}(\mathbb P_{B\ri D},\mathbb P_{B'\ri D'}).$$
According to Lemma \ref{n1},  $\Phi$ is  surjective.

 If $\mathbb P$ is a strict Picard category, and $\mathcal M_{\mathbb P}$ is an abelian crossed module associated to it, then $\Phi(\mathcal M_{\mathbb P})=\mathbb P$ (not only isomorphic). Therefore, $\Phi$ is an equivalence.
\end{proof}
\noindent{\bf Remark.} Denoted by $\bf AbCross^\ast$ the subcategory of
 $\bf AbCross$ whose morphisms are homomorphisms of abelian crossed modules ($\varphi=0$), and denote by  $\bf Picstr^\ast$ the subcategory of $\bf Picstr$ whose morphisms are \emph{strict} symmetric monoidal functors ($\widetilde{F}=id$). Then these two categories are equivalent via $\Phi$. This result is analogous to  Theorem 1 \cite{Br76}.
%%%%%%%%%%%%%%%%%%%%%%%%%%%%%%%%%%%%%%

\section {Classification of group extensions of the type of an abelian crossed module}

The concept of group extension of the type of a crossed module was introduced by Dedecker \cite{Ded} (see also \cite{Br94}). This concept has a version for abelian crossed modules as follows.

\vspace{0.1cm}
 \noindent\textbf{Definition.}
Let $\mathcal M= (B\xrightarrow{d} D)$ be an abelian crossed
module, and let $Q$ be an abelian group. An  \emph{abelian extension  of $B$
by $Q$ of  type  $\mathcal M$} is  the diagram of group homomorphisms
\begin{align}  \begin{diagram}\label{dng}
\xymatrix{\mathcal E:\;\;\;\;\;\; 0 \ar[r]& B \ar[r]^j \ar@{=}[d] &E  \ar[r]^p \ar[d]^\varepsilon & Q \ar[r]& 0, \\
& B \ar[r]^d & D}
\end{diagram}
\end{align}
where the top row is exact and $(id_B,\varepsilon)$ is a homomorphism of abelian crossed modules.

So, any  extension of the type of an abelian crossed module is an
  extension of the type of a crossed module. \vspace{5pt}

Two extensions $  \mathcal E, \mathcal E'$ of $B$ by $Q$ of  type  $\mathcal M$ are said to be \emph{equivalent }  if the following diagram commutes
\begin{align}
 \begin{diagram}\label{td}
\xymatrix{\mathcal E:\;\;0 \ar[r]& B \ar[r]^j \ar@{=}[d] &E  \ar[r]^p
\ar[d]^\alpha & Q \ar[r]\ar@{=}[d] & 0,&\;\;\;\;E
\ar[r]^\varepsilon&D \\
 \mathcal E':\;\;0 \ar[r]& B \ar[r]^{j'}  &E'  \ar[r]^{p'}   & Q \ar[r]&
 0,&\;\;\;\;E'
\ar[r]^{\varepsilon'}&D}
\end{diagram}
\end{align}
and $\varepsilon'\alpha=\varepsilon$. Obviously, $\alpha $ is
an isomorphism.

In the diagram
\begin{equation*} \label{bd10} \begin{diagram}
\xymatrix{\mathcal E:\;\; 0 \ar[r]& B \ar[r]^j \ar@{=}[d] &E  \ar[r]^p \ar[d]^\varepsilon & Q \ar[r]\ar@{.>}[d]^\psi & 0, \\
 & B \ar[r]^d  &D  \ar[r]^q   & \text{Coker}d }
\end{diagram}
\end{equation*}
since the top row is exact and $ q\circ \varepsilon \circ j = q\circ
d =0,$ there  is a homomorphism  $\psi: Q\rightarrow \Coker d $ such
that the right hand side square commutes. Moreover, $\psi$  is
dependent only on the equivalence class of the extension $\mathcal
E$.

Our objective is to study the set
$$\Ext^{ab}_{B\ri D}(Q,B,\psi)$$
 of equivalence
classes of extensions of $B$ by $Q$ of type
 $B\xrightarrow{d} D$ inducing $\psi:Q\rightarrow \Coker d$. It is well-known that
the set
$\Ext_{B\ri D}(Q,B,\psi)$ for extensions  of the type of a (not necessarily abelian) crossed module was classified by Brown and Mucuk.
In the present paper, we use the obstruction theory of symmetric monoidal functors
to prove Theorem \ref{dlc} which is an abelian analogue of Theorem 5.2 in \cite{Br94}.
  The second assertion of
this theorem can be  seen as a consequence of Schreier Theory
(Theorem \ref{schr}) due to symmetric monoidal functors between strict Picard categories
$\mathbb P_{B\ri D}$ and $\Dis Q$, where $\Dis Q$ is a Picard category of
type  $(Q,0,0)$.
\begin{lem}\label{bd7} Let   $B\xrightarrow{d} D$ be an abelian crossed module, $Q$ be an abelian group and
$\psi:Q\ri\Coker d$ be a group homomorphism. Then,
for each symmetric monoidal functor $(F,\widetilde{F}):\Dis Q\ri \mathbb P$ which satisfies $F(0)=0$ and induces the pair  $(\psi,0): (Q,0)\rightarrow (\Coker d, \Ker d)$, there exists an extension
$\mathcal E_F$ of  $B$ by $Q$ of type $B\ri
D$ inducing $\psi$.
\end{lem}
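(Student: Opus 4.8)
The plan is to build $\mathcal E_F$ directly from the structure maps of $(F,\widetilde F)$ by a Schreier-type twisted product. First I would extract the relevant data. On objects, $F$ gives a function $q\mapsto F(q)$ from $Q$ to $D$, normalized since $F(0)=0$, and because $F$ induces the pair $(\psi,0)$ the class of $F(q)$ in $\pi_0\mathbb P=\Coker d$ equals $\psi(q)$. Each structure morphism $\widetilde F_{q_1,q_2}\colon F(q_1)+F(q_2)\to F(q_1+q_2)$ of $\mathbb P$ is, by the description of the hom-sets of $\mathbb P$, an element $\widetilde F_{q_1,q_2}\in B$ with
$$d(\widetilde F_{q_1,q_2})=F(q_1)+F(q_2)-F(q_1+q_2).$$
Unwinding the three coherence axioms of a symmetric monoidal functor in the strict Picard categories $\Dis Q$ and $\mathbb P$ — all constraints being identities, and composition and tensor of morphisms in $\mathbb P$ being addition in $B$, exactly as in the proof of Lemma \ref{t1} — shows that $\{\widetilde F_{q_1,q_2}\}$ is normalized, symmetric, and satisfies the $2$-cocycle identity $\widetilde F_{q_2,q_3}+\widetilde F_{q_1,q_2+q_3}=\widetilde F_{q_1,q_2}+\widetilde F_{q_1+q_2,q_3}$.

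Next I would set $E=B\times Q$ as a set, with addition
$$(b_1,q_1)+(b_2,q_2)=(b_1+b_2+\widetilde F_{q_1,q_2},\,q_1+q_2).$$
Associativity of this operation is precisely the $2$-cocycle identity, commutativity is the symmetry of $\widetilde F$, and normalization makes $(0,0)$ a zero and provides inverses, so $E$ is an abelian group. Define $j\colon B\to E$ by $j(b)=(b,0)$, $p\colon E\to Q$ by $p(b,q)=q$, and $\varepsilon\colon E\to D$ by $\varepsilon(b,q)=d(b)+F(q)$. Then $j$ and $p$ are homomorphisms (using normalization for $j$), $p$ is onto, and $\Ker p=\operatorname{Im}j$, so the sequence $0\to B\xrightarrow{j}E\xrightarrow{p}Q\to 0$ is exact; and the verification that $\varepsilon$ is additive reduces exactly to the displayed relation $d(\widetilde F_{q_1,q_2})=F(q_1)+F(q_2)-F(q_1+q_2)$, that is, to the fact that each $\widetilde F_{q_1,q_2}$ is a morphism of $\mathbb P$.

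It then remains to check compatibility. Since $F(0)=0$ we get $\varepsilon j(b)=d(b)$, so $(id_B,\varepsilon)$ is a homomorphism of abelian crossed modules ($B\xrightarrow{j}E$ being abelian because $E$ is), and $\mathcal E_F$ is an abelian extension of $B$ by $Q$ of type $B\to D$; its induced homomorphism $Q\to\Coker d$ sends $q$ to the class of $\varepsilon(0,q)=F(q)$, which is $\psi(q)$, so $\mathcal E_F$ induces $\psi$. The only delicate point is the first step: one must carefully read off from the coherence diagrams that $\widetilde F$ is a genuine normalized symmetric $2$-cochain on $Q$ with values in $B$ — not merely in $\Ker d$, since $F$ is not assumed regular — satisfying the cocycle identity; everything after that is a routine verification.
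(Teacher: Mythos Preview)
Your proposal is correct and follows essentially the same approach as the paper: extract from $(F,\widetilde F)$ a normalized symmetric $2$-cocycle $f(u,v)=\widetilde F_{u,v}$ with values in $B$, form the twisted product $E=B\times_f Q$, and define $j,p,\varepsilon$ exactly as you do. Your closing remark that $\widetilde F$ lands in $B$ rather than in $\Ker d$ (since $F$ is not assumed regular) is a useful clarification that the paper leaves implicit.
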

Such an extension $\mathcal E_F$ is called  {\it associated} to a symmetric monoidal functor $(F,\widetilde{F})$.
\begin{proof}
Suppose that $(F,\widetilde{F}):\; \Dis Q\ri \mathbb P$ is a symmetric monoidal functor.  Then, we set a function
  $f: Q\times Q \rightarrow
B$  as follows $$f(u, v) = \widetilde{F}_{u, v}.$$

Because $\widetilde{F}_{u,v}$ is  a morphism in $\mathbb P$, one has
\begin{equation*} F(u)+F(v)=df(u, v)+F(u+ v).\label{mt2}
\end{equation*}

Since $F(0)=0$ and   $(F,\widetilde{F})$ is compatible with the
strict constraints of Dis$Q$ and $\mathbb P$, $f$ is a normalized
function satisfying
%\begin{equation} 
\begin{align}
f(v,t)+ f(u,v+t)&=f(u,v)+ f(u+v,t), \label{eq1}\\
%\end{equation}
%\begin{equation}
f(u,v)&= f(v,u). \label{eq1'}
\end{align}
%\end{equation}
Now we construct the semidirect product $E_0=[B,f,Q]$, that is, $E_0=B\times Q$
with the operation
$$(b,u)+(c,v)= (b+c + f(u,v),u+v).$$
The set $E_0$ is an abelian group due to the normalization of $f$
and the relations \eqref{eq1}, \eqref{eq1'},  the zero element is
$(0,0)$ and $-(b,u)=(-b -f(u,-u),-u)$. Then, we have an exact
sequence of abelian groups
 $$ \mathcal E_F:\;\;0\ri B\stackrel{j_0}{\rightarrow}E_0\stackrel{p_0}{\rightarrow}Q\ri 0,$$
 where
$$ j_0(b)=(b,0),\  p_0(b,u)= u, \ b\in B, u\in Q.$$
The map $\varepsilon:E_0\ri D$  given by
$$\varepsilon(b,u)=db+F(u),\;(b,u)\in E_0,$$
is a homomorphism, and hence the pair $(id_B,\varepsilon)$ is a
homomorphism of abelian crossed modules. 
Therefore,
one obtains an extension of the type of an abelian crossed module $\mathcal E_F$ satisfying  the diagram \eqref{dng}.
For all $u\in Q$, one has
 $$q\varepsilon(b,u)= q(db+F(u))= qF(u)=\psi(u),$$
i.e., this extension induces $\psi: Q\rightarrow \Coker d$.
\end{proof}
Under the assumptions of Lemma \ref{bd7}, we have

\begin{thm}[Schreier  Theory  for group extensions of the type of an abelian crossed module]\label{schr}
There exists a bijection
$$\Omega:\mathrm{Hom}^{Pic}_{(\psi,0)}[\mathrm{Dis}Q,\mathbb P_{B\rightarrow D}]\rightarrow \mathrm{Ext}^{ab}_{B\ri D}(Q, B,\psi)$$
if one of the above sets is nonempty.
\end{thm}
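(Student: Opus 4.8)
The plan is to construct $\Omega$ explicitly, show it is well-defined on homotopy classes, and then exhibit an inverse. Starting from a symmetric monoidal functor $(F,\widetilde{F}):\mathrm{Dis}Q\to\mathbb P$ of type $(\psi,0)$ with $F(0)=0$, Lemma \ref{bd7} already produces an extension $\mathcal E_F$ inducing $\psi$; I would set $\Omega[F,\widetilde{F}]=[\mathcal E_F]$. (If the chosen representative does not satisfy $F(0)=0$, one first replaces it by a homotopic one that does — this is possible since $\mathrm{Dis}Q$ and $\mathbb P$ are strict, so any symmetric monoidal functor is homotopic to one sending $0$ to $0$.) The first real task is well-definedness: if $\theta:(F,\widetilde{F})\Rightarrow(F',\widetilde{F}')$ is a symmetric monoidal natural equivalence, then $\theta$ assigns to each $u\in Q$ a morphism $\theta_u:F(u)\to F'(u)$ in $\mathbb P$, i.e.\ an element $\theta_u\in B$ with $d\theta_u = F'(u)-F(u)$, normalized by $\theta_0=0$; the coherence condition $\widetilde{F}'_{u,v}(\theta_u\otimes\theta_v)=\theta_{u+v}\widetilde{F}_{u,v}$ becomes $f'(u,v)+\theta_u+\theta_v = \theta_{u+v}+f(u,v)$. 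I would then check that $(b,u)\mapsto(b+\theta_u,u)$ is an isomorphism $E_0\to E_0'$ of abelian groups compatible with $j_0,j_0',p_0,p_0'$ and with $\varepsilon,\varepsilon'$ (using $d\theta_u=F'(u)-F(u)$), hence an equivalence of extensions of the type of an abelian crossed module.

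Next I would build the inverse map $\Xi:\mathrm{Ext}^{ab}_{B\to D}(Q,B,\psi)\to\mathrm{Hom}^{Pic}_{(\psi,0)}[\mathrm{Dis}Q,\mathbb P]$. Given an extension $\mathcal E$ as in diagram \eqref{dng} inducing $\psi$, choose a normalized set-theoretic section $\sigma:Q\to E$ of $p$; then $f(u,v):=\sigma(u)+\sigma(v)-\sigma(u+v)\in j(B)\cong B$ is a symmetric normalized $2$-cocycle (relations \eqref{eq1} and \eqref{eq1'}), and $E\cong[B,f,Q]$. Define $F:\mathrm{Dis}Q\to\mathbb P$ by $F(u)=\varepsilon\sigma(u)\in D=\mathrm{Ob}\,\mathbb P$ on objects, $F(\mathrm{id}_u)=\mathrm{id}_{F(u)}$ on morphisms (all morphisms in $\mathrm{Dis}Q$ are identities), and $\widetilde{F}_{u,v}:=f(u,v)$, viewed as a morphism $F(u)+F(v)\to F(u+v)$ in $\mathbb P$; this is legitimate because $\varepsilon\sigma(u)+\varepsilon\sigma(v)=\varepsilon(\sigma(u)+\sigma(v)) = \varepsilon(j(f(u,v))+\sigma(u+v)) = d f(u,v)+F(u+v)$, which is exactly the condition for $f(u,v)$ to be a morphism with the stated source and target. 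The cocycle and symmetry identities for $f$ translate into the coherence axioms for $(F,\widetilde{F})$ (associativity and commutativity — unit coherence follows from normalization), so $(F,\widetilde{F})$ is a symmetric monoidal functor; since $\mathbb P$ is strict, $F_*$ is forced. Because $q\varepsilon\sigma(u)=\psi(u)$ and $\widetilde{F}_{u,v}\in\Ker d$, the induced pair on $(\pi_0,\pi_1)$ is $(\psi,0)$. Changing the section $\sigma$ to $\sigma'$ with $\sigma'(u)=j(\theta_u)+\sigma(u)$, $\theta_u\in B$ normalized, changes $f$ by a coboundary and yields a symmetric monoidal natural equivalence $\theta$ between the two functors, so $\Xi$ is well-defined on equivalence classes; replacing $\mathcal E$ by an equivalent extension via $\alpha$ transports sections and changes nothing up to homotopy.

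Finally I would verify $\Omega\circ\Xi=\mathrm{id}$ and $\Xi\circ\Omega=\mathrm{id}$. Starting from $(F,\widetilde{F})$, Lemma \ref{bd7} gives $\mathcal E_F=[B,f,Q]$ with $f=\widetilde{F}$ and $\varepsilon(b,u)=db+F(u)$; taking the obvious section $\sigma(u)=(0,u)$ recovers cocycle $f$ and object-function $u\mapsto\varepsilon(0,u)=F(u)$, hence $\Xi\Omega[F,\widetilde{F}]=[F,\widetilde{F}]$. Conversely, from $\mathcal E$ with section $\sigma$ we get $(F,\widetilde{F})$ with $\widetilde{F}=f$ and $F(u)=\varepsilon\sigma(u)$, and then $\mathcal E_F=[B,f,Q]$ with $\varepsilon_F(b,u)=db+\varepsilon\sigma(u)$; the map $(b,u)\mapsto j(b)+\sigma(u)$ is an isomorphism $[B,f,Q]\to E$ giving an equivalence of extensions of the type of an abelian crossed module, so $\Omega\Xi[\mathcal E]=[\mathcal E]$. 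The nonemptiness hypothesis is used only to ensure both sides are simultaneously inhabited so that the bijection statement is non-vacuous; the constructions above are what do the work. The main obstacle I anticipate is bookkeeping rather than conceptual: one must be scrupulous that ``morphism of $\mathbb P$'' always means an element of $B$ satisfying the source–target constraint $x=d(b)+y$, so that every place where $f$, $\widetilde{F}$, or $\theta$ is reinterpreted as a morphism (or back) one re-checks this identity — and that the compatibility with $\varepsilon$ (not merely with $j$ and $p$) is maintained throughout, since that is exactly what distinguishes extensions of the type of an abelian crossed module from ordinary abelian extensions.
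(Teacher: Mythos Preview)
Your proposal is correct and follows essentially the same route as the paper: the paper packages the argument as showing that $\Omega$ is well-defined and injective (homotopic functors yield equivalent extensions and conversely) and then surjective (from an extension, pick a section, extract a cocycle, and rebuild the functor), whereas you phrase the same computations as constructing an explicit two-sided inverse $\Xi$, but the underlying constructions --- the crossed product $[B,f,Q]$, the section-based cocycle, and the map $(b,u)\mapsto(b+\theta_u,u)$ --- are identical. Two minor slips to correct in the bookkeeping you yourself flag: with the paper's convention $\Hom(x,y)=\{b\in B:x=d(b)+y\}$, a homotopy $\theta_u:Fu\to F'u$ gives $d\theta_u=F(u)-F'(u)$ (not the reverse sign), and $\widetilde{F}_{u,v}=f(u,v)$ does \emph{not} generally lie in $\Ker d$ (indeed $d f(u,v)=F(u)+F(v)-F(u+v)$); the induced map on $\pi_1$ is zero simply because $\pi_1(\Dis Q)=0$.
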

\begin{proof}
{\it Step 1: Symmetric monoidal functors  $(F,\widetilde{F}) $, $(F',\widetilde{F}') $ are homotopic if and only if the
corresponding associated extensions $\mathcal E_F, \mathcal E_{F}'$  are equivalent.}

  First, since every symmetric monoidal functor $(F,\widetilde{F}) $ is  homotopic to one $(G,\widetilde{G}) $ in which $G(0)=0$, the following symmetric monoidal functors are regarded as the functors which have this property .

Suppose that $F, F':$ Dis$Q\ri\mathbb P_{B\rightarrow D}$ are homotopic
by a homotopy $\alpha:F\ri F'$.  By Lemma \ref{bd7}, there exist the
extensions $\mathcal E_F$ and $\mathcal E_{F'}$ associated to $F$ and $F'$, respectively. Then, it follows from the definition of a homotopy that $\alpha_0=0$ and  the
following diagram commutes
\[\begin{diagram}\xymatrix{Fu+Fv\ar[r]^ {\widetilde{F}_{u,v}}\ar[d]_{\alpha_u\otimes
\alpha_v}&F(u+v)\ar[d]^{\alpha_{u+v}}\\
F'u+F'v\ar[r]_{\widetilde{F}'_{u,v}}&F'(u+v),
}\end{diagram}\]
that is,  $$\widetilde{F}_{u,v}+\alpha_{u+v}=\alpha_u\otimes
\alpha_v+\widetilde{F}'_{u,v}.$$
By the relation (\ref{mt}), one has
\begin{equation}\label{eq5}
f(u,v)+\alpha_{u+v}=\alpha_u+\alpha_v+f'(u,v),
\end{equation}
where  $f(u,v)=\widetilde{F}_{u,v}, f'(u,v)=\widetilde{F}'_{u,v}$. Now we set
\begin{align*} \alpha^\ast: E_F&\rightarrow E_{F'}\\
(b,u)&\mapsto (b+\alpha_u,u). \end{align*}
Then $\alpha^\ast$ is a homomorphism thanks to the relation (\ref{eq5}). Further, the diagram \eqref{td} commutes.
It remains to show that $\varepsilon'\alpha^\ast=\varepsilon$.
Since $\alpha:F\ri F'$ is a homotopy,
$F(u)=d(\alpha_u)+F'(u)$. Then,
\[\begin{aligned}\varepsilon'\alpha^*(b,u)&=\varepsilon'(b+\alpha_u,u)=d(b+\alpha_u)+F'(u)\\
\;\;&
=d(b)+d(\alpha_u)+F'(u)=d(b)+F(u)=\varepsilon(b,u).\end{aligned}\] Therefore,
 $\mathcal E_F$ and $ \mathcal E_{F'}$ are equivalent.

Conversely, if an isomorphism $\alpha^\ast:E_F\rightarrow E_{F'}$  satisfying the triple $(id_B, \alpha^*,id_Q)$ is an equivalence of extensions, then it is easy to see that
 $$\alpha^\ast(b,u)=(b+\alpha_u,u),$$ where  $\alpha:Q\ri
B$ is a function satisfying $\alpha_0=0$. By retracing our steps, $\alpha$ is a homotopy between $F$ and $F'$.

%%%%%%%%%%%%%%%%%%%%%%%%%%%%%%5

{\it Step 2: $\Omega$ is surjective.}

Assume that $\mathcal E$ is an extension $E$ of $B$ by $Q$
of type $B\rightarrow D$ inducing $\psi:
Q\rightarrow \Coker d$. We prove that $\mathcal E$ is equivalent to the semidirect product extension $\mathcal E_F$ which is associated to a symmetric monoidal functor
$(F,\widetilde{F}):\;\mathrm{Dis} Q\ri \mathbb P_{B\ri D}$ .

For any $u\in Q$, choose a representative $e_u\in E$ such that
$p(e_u)=u,\;e_0=0$. Each element of $E$ can be represented uniquely  as  $b+e_u $ for $b\in B, u\in Q$. The representatives  $\left\{e_u\right\}$ induces a normalized function $f:Q\times Q\ri B$
 by
\begin{equation}    e_u+e_v=f(u, v)+e_{u+v}.\label{eq3'}\end{equation}
Then,  the group structure of $E$ can be described by
\begin{equation*} (b+e_u)+(c+e_v)= b+c+f(u, v)+e_{u+v}.
   \end{equation*}

Now, we construct a symmetric monoidal functor   $(F,\widetilde{F}):\;
\Dis Q\ri \mathbb P$ as follows. Since $\psi(u)=\psi p(e_u)=q\varepsilon(e_u)$, $\varepsilon(e_u)$ is a representative of $\psi(u)$ in $D$. Thus, we set
 $$F(u)=\varepsilon(e_u),\   \widetilde{F}_{u,v}=f(u,v).$$
The relation  \eqref{eq3'} shows that $ \widetilde{F}_{u,v}$ are
actually  morphisms in $\mathbb P$.  Obviously, $F(0)=0.$ This
together with  the normalization condition of the function  $f$
implies the compatibility of $(F, \widetilde{F})$ with the unit
constraints. The associativity and commutativity  laws of the
operation in
 $E$ lead to the relations
   (\ref{eq1}), (\ref{eq1'}), respectively. These relations prove that $(F, \widetilde{F})$  is compatible with the associativity  and commutativity constraints of Dis$ Q$ and $\mathbb P$, respectively. The naturality of  $\widetilde{F}_{u,v}$ and the condition of $F$ preserving the composition of morphisms are obvious.

Finally, it is easy to check that  the semidirect product extension $\mathcal
E_F$ associated to $(F,\widetilde{F})$ is equivalent to the extension
$\mathcal E$ by the isomorphism  $ \beta : (b,u)\mapsto b+e_u$.
\end{proof}

\vspace{0.2cm}
Let $\mathbb P= \mathbb P_{B\rightarrow D}$
be a strict  Picard category associated to an abelian crossed module $B\rightarrow D$. Since $\pi_0(\mathbb P)=\Coker d$ and $\pi_1(\mathbb P)=\Ker d$, the reduced  Picard category $ S_{\mathbb P}$ is of form
$$ S_{\mathbb P}= (\mathrm{Coker} d, \mathrm{Ker} d,k), \; \overline{k}\in H^3_s (\mathrm{Coker} d, \mathrm{Ker} d).$$
Then, by the relation (\ref{ct}),  the pair of homomorphisms
$(\psi,0):(Q,0)\rightarrow (\mathrm{Coker} d, \Ker d)$ induces an
{\it obstruction}
 $$\psi^\ast k\in Z^3_s(Q, \Ker d).$$
Under this notion of obstruction, we state and prove the following
theorem.
\begin{thm}\label{dlc} Let  $(B, D,d)$ be an abelian crossed module, and let
$\psi: Q\rightarrow \Coker d$ be a  homomorphism of abelian groups. Then, the vanishing of $\overline{\psi^\ast k}$ in $H^3_s (Q, \Ker d)$ is  necessary and sufficient for there to exist an extension of $B$ by $Q$ of type $B\rightarrow D$ inducing $\psi$. Further, if $\overline{\psi^\ast k}$ vanishes, then the set of equivalence classes of such extensions is bijective with
$H^2_s(Q,\Ker d)$.
\end{thm}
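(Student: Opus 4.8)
The plan is to reduce the statement to the reduced Picard categories and then quote Theorem \ref{schr} together with Proposition \ref{md1}. First I would use Theorem \ref{schr}: the set $\Ext^{ab}_{B\ri D}(Q,B,\psi)$ is nonempty exactly when the set $\Hom^{Pic}_{(\psi,0)}[\Dis Q,\mathbb P]$ of homotopy classes of symmetric monoidal functors $\Dis Q\ri\mathbb P$ inducing the pair $(\psi,0):(Q,0)\ri(\Coker d,\Ker d)$ is nonempty, and in the nonempty case $\Omega$ is already a bijection between them. So it is enough to decide when $\Hom^{Pic}_{(\psi,0)}[\Dis Q,\mathbb P]$ is nonempty and, when it is, to identify it with $H^2_s(Q,\Ker d)$.

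Next I would pass to reduced categories. Since $\Dis Q$ is a Picard category of type $(Q,0,0)$ it is its own reduction, and by the remark preceding the theorem the reduction of $\mathbb P=\mathbb P_{B\ri D}$ is $S_{\mathbb P}=(\Coker d,\Ker d,k)$. By Proposition \ref{md1}(i), composing with the canonical equivalences $H$ and $G'$, i.e. sending $(F,\widetilde{F})$ to $\mathbb S_F=G'FH$, yields a bijection
\[\Hom^{Pic}_{(\psi,0)}[\Dis Q,\mathbb P]\;\leftrightarrow\;\Hom^{Pic}_{(\psi,0)}[\Dis Q,S_{\mathbb P}]\]
that preserves the induced pair of homomorphisms, since $G'$ and $H$ are mutually inverse equivalences and act as the identity on $\pi_0$ and $\pi_1$.

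Then I would invoke parts (ii) and (iii) of Proposition \ref{md1}: a symmetric monoidal functor $\Dis Q\ri S_{\mathbb P}$ inducing $(\psi,0)$ is precisely a realization of the functor of type $(\psi,0)$ between these reduced categories, whose obstruction by formula \eqref{ct} is $\psi^\ast k\in Z^3_s(Q,\Ker d)$ (the associativity and symmetry constraints of $\Dis Q$, and the map on $\pi_1$, all being zero). Hence such a realization exists if and only if $\overline{\psi^\ast k}$ vanishes in $H^3_s(Q,\Ker d)$, and when it does, the homotopy classes of realizations are in bijection with $H^2_s(Q,\Ker d)$. Chaining: $\overline{\psi^\ast k}=0$ $\iff$ a symmetric monoidal functor $\Dis Q\ri S_{\mathbb P}$ of type $(\psi,0)$ exists $\iff$ $\Hom^{Pic}_{(\psi,0)}[\Dis Q,\mathbb P]\neq\emptyset$ $\iff$ $\Ext^{ab}_{B\ri D}(Q,B,\psi)\neq\emptyset$, which is the first assertion; and in the nonempty case Theorem \ref{schr} and the displayed bijection give $\Ext^{ab}_{B\ri D}(Q,B,\psi)\leftrightarrow H^2_s(Q,\Ker d)$, which is the second.

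The step I expect to require the most care is the passage to reduced categories: I need to check that it transports the notion of ``functor of type $(\psi,0)$'' faithfully and is a bijection on homotopy classes, so that the obstruction computed inside $S_{\mathbb P}$ via \eqref{ct} really is the intrinsic invariant $\psi^\ast k$ rather than something depending on the chosen canonical equivalences. Everything else is a formal concatenation of Theorem \ref{schr} and Proposition \ref{md1}.
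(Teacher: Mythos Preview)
Your proposal is correct and follows essentially the same strategy as the paper: reduce to $S_{\mathbb P}=(\Coker d,\Ker d,k)$, invoke Proposition~\ref{md1} for the obstruction/classification, and use Theorem~\ref{schr} to pass between functors and extensions. The one organizational difference is in the necessity direction: you deduce ``extension $\Rightarrow$ functor $\Dis Q\to\mathbb P$'' straight from the surjectivity half of Theorem~\ref{schr}, whereas the paper instead builds the strict Picard category $\mathbb P'=\mathbb P_{B\to E}$ of the extension, applies Lemma~\ref{t1} to the crossed-module homomorphism $(id_B,\varepsilon)$ to get $F:\mathbb P'\to\mathbb P$, and then reduces (noting $S_{\mathbb P'}=\Dis Q$) via Proposition~\ref{md1}(i). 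Your packaging is a bit cleaner since Theorem~\ref{schr} already contains that construction; the paper's route has the mild advantage of making explicit which functor the extension produces. Your caveat about the reduction step is well placed but harmless here: since $\Dis Q$ is already reduced, only $G':\mathbb P\to S_{\mathbb P}$ intervenes, and it induces identities on $\pi_0,\pi_1$, so the type $(\psi,0)$ and the obstruction $\psi^\ast k$ are preserved.
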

\begin{proof}
By the assumption $\overline{\psi^\ast k}=0$, it follows from by Proposition \ref{md1} that
there is a symmetric monoidal functor $(\Psi,\widetilde{\Psi}):\Dis Q\ri S_{\mathbb P}$.
 Then the composition of $(\Psi,\widetilde{\Psi})$ and the canonical symmetric monoidal functor
$(H,\widetilde{H}): S_{\mathbb P}\ri \mathbb P$ is a  symmetric monoidal functor
$(F,\widetilde{F}):\Dis Q\ri \mathbb P$, and hence by Lemma \ref{bd7}, we obtain an associated extension
$\mathcal E_F$.

Conversely, suppose that there is an extension as in the diagram \eqref{bd10}. Let $ \mathbb P'$  be a strict Picard category associated to the
abelian crossed module $B\ri E$. Then, according to Lemma \ref{t1}, there is a
  symmetric monoidal functor   $F:\mathbb P' \ri \mathbb P$. Since the reduced Picard category
of $\mathbb P'$  is $\Dis Q$,  by Proposition  \ref{md1} $i)$, $F$ induces a
symmetric monoidal functor of type  $(\psi,0)$ from $\Dis Q$ to $S_\mathbb P=(\Coker d,\Ker d,k)$. Now,
 thanks to Proposition \ref{md1} $iii)$, the obstruction of the pair $(\psi,0)$  vanishes in $H^3_s(Q, \Ker d),$ i.e.,
  $\overline{\psi^\ast k}=0$.

The final assertion  of the theorem  is obtained  from Theorem
\ref{schr}. First, there is a natural bijection
$$\Hom^{Pic}_{(\psi,0)}[\Dis Q, \mathbb P]\leftrightarrow \Hom^{Pic}_{(\psi,0)}[\Dis Q, S_{\mathbb P}].$$
Since $\pi_0(\Dis Q)=Q, \pi_1(S_{\mathbb P})=\Ker d$, the bijection
$$\mathrm{Ext}^{ab}_{B\ri D}(Q, B,\psi)\leftrightarrow
H^2_s(Q,\Ker d)$$
follows from Theorem \ref{schr} and Proposition \ref{md1}.
\end{proof}

In the case when the homomorphism  $d$ of the abelian crossed module $\mathcal M$ is a monomorphism, then the diagram
 \eqref{bd10} shows that the extension ($\mathcal E:B\ri E\ri Q$) is obtained from the extension
($\mathcal D:B\ri D\ri \Coker d$) and $\psi$, i.e.,
$\mathcal E=\mathcal D \psi$ (see \cite{MacL63, Fu}).
  Since $\Ker d=0$, by Theorem \ref{dlc}, we obtain a well-known result as follows.
\begin{hq} Let ($\mathcal D: B\ri D\ri C$) be an  extension of abelian groups and  $\psi: Q\ri C$ be a
 homomorphism of abelian groups. Then, there is an extension $\mathcal D \psi$  determined uniquely up to equivalence.
\end{hq}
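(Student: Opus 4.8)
The plan is to obtain this corollary as a direct specialization of Theorem \ref{dlc} to the case of a monic $d$. First I would regard the short exact sequence $\mathcal D: 0 \ri B \ri D \ri C \ri 0$ as the abelian crossed module $\mathcal M = (B \xrightarrow{d} D)$ in which $d$ is the given monomorphism; then $\Coker d = C$ and, crucially, $\Ker d = 0$. The homomorphism $\psi: Q \ri C$ is thus precisely a homomorphism $\psi: Q \ri \Coker d$, so the hypotheses of Theorem \ref{dlc} are met verbatim for the pair $(B \ri D, \psi)$, and both conclusions of that theorem become available.

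The decisive observation is that the coefficient group $\Ker d$ is trivial. Consequently every symmetric cochain of $Q$ with values in $\Ker d = 0$ is the zero cochain, whence $H^3_s(Q, \Ker d) = 0$ and $H^2_s(Q, \Ker d) = 0$. The first vanishing forces the obstruction $\overline{\psi^\ast k}$ to be zero automatically; by the necessity-and-sufficiency part of Theorem \ref{dlc} there therefore exists an extension of $B$ by $Q$ of type $B \ri D$ inducing $\psi$, which establishes existence. The second vanishing, combined with the final assertion of Theorem \ref{dlc}, shows that the set of equivalence classes of such extensions is in bijection with the one-point group $H^2_s(Q, \Ker d) = 0$; hence any two such extensions are equivalent, which establishes uniqueness up to equivalence.

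It remains only to identify the type-$(B \ri D)$ extension inducing $\psi$ with the classical pullback $\mathcal D\psi$. As already noted in the paragraph preceding the statement, when $d$ is a monomorphism the defining diagram \eqref{bd10} exhibits $E$ as the fibre product $D \times_C Q$, with $B \hookrightarrow E$ and $E \twoheadrightarrow Q$ the evident maps, which is exactly $\mathcal D\psi$ in the sense of \cite{MacL63, Fu}. I expect no genuine obstacle here: the single point requiring care is that the notion of equivalence of extensions used in Theorem \ref{dlc} (realized by a triple $(id_B, \alpha, id_Q)$ with $\varepsilon'\alpha = \varepsilon$) coincides with the classical equivalence of pullback extensions. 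This is immediate, since $\Ker d = 0$ makes $\varepsilon$ injective on the fibres, so the map $\varepsilon$ is determined by the $Q$-component and the compatibility condition $\varepsilon'\alpha = \varepsilon$ reduces to the usual commuting-square condition. Existence and uniqueness of $\mathcal D\psi$ then follow at once.
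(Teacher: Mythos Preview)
Your proposal is correct and follows essentially the same approach as the paper: view the extension as an abelian crossed module with $d$ monic so that $\Ker d=0$, then invoke Theorem~\ref{dlc}, using the vanishing of $H^3_s(Q,0)$ and $H^2_s(Q,0)$ for existence and uniqueness respectively, and identify the resulting extension with the classical pullback $\mathcal D\psi$ via diagram~\eqref{bd10}. The only difference is that you spell out in more detail the trivialization of the obstruction and the compatibility of the two notions of equivalence, whereas the paper simply cites \cite{MacL63, Fu} for the identification and passes directly to the conclusion.
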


{\bf Acknowledgement} The authors are much indebted to the referee, whose useful observations greatly improved our exposition.

\begin{center}
{}
\end{center}

\end{document}